\newtheorem{theorem}{Theorem}[section]
\newtheorem{lemma}[theorem]{Lemma}
\newtheorem{corollary}[theorem]{Corollary}
\theoremstyle{definition}
\theoremstyle{remark}
\newtheorem{remark}[theorem]{Remark}
\numberwithin{equation}{section}
\begin{document}
	\title[Further norm and numerical radius inequalities ]{Further norm and numerical radius inequalities for sum of
Hilbert space operators}

 \author[D. Afraz, R. Lashkaripour, M. Bakherad ]{D. Afraz, R. Lashkaripour and M. Bakherad }

  \address{ Department of Mathematics, Faculty of Mathematics, University of Sistan and Baluchestan, Zahedan, Iran.}

\email{mojtaba.bakherad@yahoo.com}	
\email{davood.afraz@pgs.usb.ac.ir}
\email {lashkari@hamoon.usb.ac.ir}
\email{mojtaba.bakherad@yahoo.com; bakherad@member.ams.org}	
	
	\date{\today}
	\subjclass[2010]{Primary: 47A30, 47A12; Secondary  47A63,
		47L05.}
	
	\keywords{Keywords: Operator norm, Positive operator, Norm inequality, Numerical radius.}
	
	\begin{abstract}
	Let ${\mathbb B}(\mathscr H)$ denotes the set of all bounded linear operators on a complex Hilbert space ${\mathscr H}$. In this paper, we present
some norm inequalities for sums of operators which are a generalization some recent results. Among other inequalities, it is shown that if $S, T\in {\mathbb B}({\mathscr H})$ are normal operators, then
\begin{eqnarray*}
&&\left\Vert S+T\right\Vert\\&\leq& \frac{1}{2}(\left\Vert S\right\Vert+\left\Vert T\right\Vert)+\frac{1}{2}\min_{t>0}\sqrt{ (\left\Vert S \right\Vert-\left\Vert T\right\Vert)^2+
\left\Vert  \frac{1}{t} f_1(\vert S \vert)g_1(\vert T\vert)+tf_2(\vert S \vert)g_2(\vert T\vert) \right\Vert^2},
\end{eqnarray*}	
where  $f_1,f_2,g_1,g_2$ are  non-negative continuous functions on $[0,\infty )$, in which $f_1(x)f_2(x)=x$ and $g_1(x)g_2(x)=x\,\,(x\geq 0)$. Moreover, several inequalities for the numerical radius are shown.
	\end{abstract}

\maketitle

\section{Introduction}

Let ${\mathbb B}(\mathscr H)$ denotes the $C^{*}$-algebra of all bounded linear operators on a complex Hilbert space ${\mathscr H}$ with an inner product $\langle \cdot,\cdot\rangle$ and the corresponding norm $ \Vert \cdot \Vert $. In the case when ${\rm dim}{\mathscr H}=n$, we identify ${\mathbb B}({\mathscr H})$ with the matrix algebra $\mathbb{M}_n$ of all $n\times n$ matrices with entries in
the complex field. For $ S\in {\mathbb B}(\mathscr H) $, let $ S = \Re(S) + i\Im(S) $
be the Cartesian decomposition of $S$, where the Hermitian matrices $ \Re(S)=\frac{S+S^{*}}{2} $ and $ \Im(S)=\frac{S-S^{*}}{2i} $ are called the real and imaginary parts of $S$, respectively.
 The numerical radius of $S\in {\mathbb B}({\mathscr H})$ is defined by
\begin{align*}
w(S):=\sup\{| \langle Sx, x\rangle| : x\in {\mathscr H}, \Vert x \Vert=1\}.
\end{align*}
It is well known that $w(\,\cdot\,)$ defines a norm on ${\mathbb B}({\mathscr H})$, which is equivalent to the usual operator norm $\Vert \cdot \Vert$. In fact, for any $S\in {\mathbb B}({\mathscr H})$,
$
\frac{1}{2}\Vert S \Vert\leq w(S) \leq\Vert S \Vert$;
 see. Let $r (\cdot)$ denote the spectral radius. It is well known that for every operator
$S\in{\mathbb B}({\mathscr H})$, we have $r(S)\leq w(S)$.
In, the authors showed that
$
  w\left( S \right) = \sup_{\theta\in\mathbb{R}}\Vert\Re({{\rm{e}}^{i\theta } S})\Vert=\sup_{\alpha^2+\beta^2=1}\Vert \alpha\Re S+\beta \Im S\Vert,
$
which is equali to the above definition.
For more facts about  the numerical radius see  and references therein.
 Let $S, T,X,Y\in {\mathbb B}({\mathscr H})$. The operator matrices  $\left[\begin{array}{cc} S&0\\ 0&Y \end{array}\right]$  and $\left[\begin{array}{cc}  0&T\\ X&0 \end{array}\right]$ are called the diagonal and off-diagonal parts of the operator matrix $\left[\begin{array}{cc} S&T\\ X&Y \end{array}\right]$, respectively.\\
In , it has been shown that if $S$ is an operator in ${\mathbb B}({\mathscr H})$, then
\begin{eqnarray}\label{abcd}
w(S)\leq \frac{1}{2}\left(\|S\|+\|S^2\|^\frac{1}{2}\right).
\end{eqnarray}%
Several refinements and generalizations of inequality  have been given; see  Yamazaki  showed that for  $S\in{\mathbb B}({\mathscr H})$ and  $t\in[0,1]$ we have
\begin{eqnarray}\label{yam-sheb}
w(S)\leq \frac{1}{2}\left(\|S\|+w(\tilde{S}_t)\right).
\end{eqnarray}%
where $S = U |S|$ is the polar decomposition of $S$ and $\tilde{S}_t =|S|^{t} U |S|^{1-t}$. Horn et al.  proved that
\begin{eqnarray}\label{horn}
\Vert S+T\Vert\leq \Vert \vert S\vert + \vert T\vert\Vert,
\end{eqnarray}%
where $S, T\in{\mathbb B}({\mathscr H})$ are normal. Davidson and Power  proved that if $S$ and $T$ are positive operators in ${\mathbb B}({\mathscr H})$, then
\begin{eqnarray}\label{abcd12}
\Vert S+T\Vert\leq \max\{\Vert S\Vert,\Vert T\Vert \}+\Vert ST\Vert^\frac{1}{2}.
\end{eqnarray}%
Inequality  has been generalized in . In , the author extended this inequality to the form
\begin{eqnarray}\label{abcd123}
\Vert S+T\Vert\leq \max\{\Vert S\Vert,\Vert T\Vert \}+\frac{1}{2}\left(\left\Vert|S|^t|T|^{1-t}\right\Vert+ \left\Vert|S^*|^{1-t}|T^*|^t\right\Vert\right),
\end{eqnarray}%
in which $S,T\in{\mathbb B}({\mathscr H})$ and $t\in[0,1]$.
In , the authors showed that a generalization of inequality  as follows:
\begin{eqnarray*}
 \Vert S+T\Vert\leq\max\{\Vert S\Vert,\Vert T\Vert\}+ \frac{1}{2}\Big(\big\Vert f(|S|)g(|T|)\big\Vert+\big\Vert f(|S^\ast|)g(|T^\ast|)\big\Vert\Big),
\end{eqnarray*}
in which  $S,T\in {\mathbb B}({\mathscr H})$  and $f,g$ be two non-negative non-decreasing continuous functions on $[0,\infty)$ such that $f(x)g(x)=x\,\,(x\geq0)$.
Recently, Shi et al.  proved the following inequality
\begin{eqnarray}\label{shi}
\left\Vert S+T\right\Vert\leq \frac{1}{2}(\left\Vert S\right\Vert+\left\Vert T\right\Vert)+\frac{1}{2}\sqrt{ (\left\Vert S \right\Vert-\left\Vert T\right\Vert)^2+
\left\Vert  \frac{1}{t} \vert S \vert^r\vert T\vert^s+t\vert S \vert^{1-r} \vert T\vert^{1-s}  \right\Vert^2}
\end{eqnarray}
for normal operators $S,T\in {\mathbb B}({\mathscr H})$, $r,s\in[0,1]$ and $t>0$.

In this paper, we give several further norm inequalities for sums of bounded linear operators. These inequalities refine and generalize inequalities.  Moreover, as another application, we show a new numerical radius inequality which is a generalization.


\section{main results}
To present our results, we need the following lemmas.
\begin{lemma}
Let $S, T, X, Y\in {\mathbb B}({\mathscr H})$. Then
\begin{eqnarray*}
\left\Vert\left[
\begin{array}{cc}
S & T \\
X & Y%
\end{array}%
\right]\right\Vert\leq
\left\Vert\left[
\begin{array}{cc}
\Vert S\Vert & \Vert T \Vert\\
\Vert X\Vert & \Vert Y\Vert%
\end{array}%
\right]\right\Vert.
\end{eqnarray*}
\end{lemma}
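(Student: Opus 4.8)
The plan is to argue directly with the definition of the operator norm on the orthogonal direct sum $\mathscr H\oplus\mathscr H$. Fix a unit vector $\xi=\begin{bmatrix}x\\ y\end{bmatrix}\in\mathscr H\oplus\mathscr H$, so that $\Vert x\Vert^2+\Vert y\Vert^2=1$, and compute
\begin{eqnarray*}
\left\Vert\left[\begin{array}{cc} S & T\\ X & Y\end{array}\right]\xi\right\Vert^2
=\left\Vert\left[\begin{array}{c} Sx+Ty\\ Xx+Yy\end{array}\right]\right\Vert^2
=\Vert Sx+Ty\Vert^2+\Vert Xx+Yy\Vert^2.
\end{eqnarray*}
By the triangle inequality and submultiplicativity of the norm, $\Vert Sx+Ty\Vert\leq\Vert S\Vert\,\Vert x\Vert+\Vert T\Vert\,\Vert y\Vert$ and $\Vert Xx+Yy\Vert\leq\Vert X\Vert\,\Vert x\Vert+\Vert Y\Vert\,\Vert y\Vert$, both quantities being non-negative.

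The next step is to recognize the resulting bound as the norm of a vector in $\mathbb C^2$ acted on by a $2\times2$ matrix with non-negative entries. Indeed, setting $u=\begin{bmatrix}\Vert x\Vert\\ \Vert y\Vert\end{bmatrix}\in\mathbb C^2$ we have $\Vert u\Vert=\sqrt{\Vert x\Vert^2+\Vert y\Vert^2}=1$, and
\begin{eqnarray*}
\Vert Sx+Ty\Vert^2+\Vert Xx+Yy\Vert^2
\leq\big(\Vert S\Vert\,\Vert x\Vert+\Vert T\Vert\,\Vert y\Vert\big)^2+\big(\Vert X\Vert\,\Vert x\Vert+\Vert Y\Vert\,\Vert y\Vert\big)^2
=\left\Vert\left[\begin{array}{cc}\Vert S\Vert & \Vert T\Vert\\ \Vert X\Vert & \Vert Y\Vert\end{array}\right]u\right\Vert^2.
\end{eqnarray*}
Since $\Vert u\Vert=1$, the right-hand side is at most $\left\Vert\left[\begin{smallmatrix}\Vert S\Vert & \Vert T\Vert\\ \Vert X\Vert & \Vert Y\Vert\end{smallmatrix}\right]\right\Vert^2$. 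Taking the supremum over all unit vectors $\xi\in\mathscr H\oplus\mathscr H$ yields the claimed inequality.

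I do not anticipate a serious obstacle here; the only point requiring a little care is the bookkeeping that passes from vectors $x,y$ in $\mathscr H$ to the scalar vector $\big(\Vert x\Vert,\Vert y\Vert\big)$ in $\mathbb C^2$, together with the observation that this scalar vector is again a unit vector, so that one may invoke the definition of $\left\Vert\left[\begin{smallmatrix}\Vert S\Vert & \Vert T\Vert\\ \Vert X\Vert & \Vert Y\Vert\end{smallmatrix}\right]\right\Vert$ as a supremum over unit vectors in $\mathbb C^2$. One should also note that the entries $\Vert S\Vert,\Vert T\Vert,\Vert X\Vert,\Vert Y\Vert$ are non-negative, which is exactly what legitimizes squaring the triangle-inequality estimates without sign issues.
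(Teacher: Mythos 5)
Your argument is correct. Note that the paper itself offers no proof of this lemma at all: it is quoted as a known auxiliary result from the literature (it goes back to Bani-Domi and Kittaneh-type norm estimates for operator matrices), so there is no "paper proof" to compare against step by step. Your direct verification from the definition is a perfectly good, self-contained replacement: for a unit vector $\xi=(x,y)$ the estimates $\Vert Sx+Ty\Vert\leq\Vert S\Vert\,\Vert x\Vert+\Vert T\Vert\,\Vert y\Vert$ and $\Vert Xx+Yy\Vert\leq\Vert X\Vert\,\Vert x\Vert+\Vert Y\Vert\,\Vert y\Vert$ have non-negative right-hand sides, so squaring and summing is legitimate, and the resulting bound is exactly $\bigl\Vert A u\bigr\Vert^2$ where $A$ is the entrywise matrix of norms and $u=(\Vert x\Vert,\Vert y\Vert)$ is a unit vector in $\mathbb{C}^2$; taking the supremum over $\xi$ finishes the proof. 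The only bookkeeping points, which you already flag, are that the block matrix acts on the Hilbert space direct sum with $\Vert(x,y)\Vert^2=\Vert x\Vert^2+\Vert y\Vert^2$ (so $u$ really is a unit vector) and that the norm of the $2\times 2$ scalar matrix is its operator norm on $\mathbb{C}^2$, which dominates $\Vert Au\Vert$. Your approach is elementary and arguably more transparent than citing the literature; the standard cited proofs buy generality (e.g.\ $n\times n$ block partitions and pinching-type arguments) that is not needed here.
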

\begin{lemma}
If $S, T\in {\mathbb B}({\mathscr H})$ in which $ST$ is selfadjoint, then
\begin{eqnarray*}
\Vert ST\Vert \leq \Vert \Re(TS)\Vert.
\end{eqnarray*}
\end{lemma}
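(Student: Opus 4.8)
The plan is to turn $\|ST\|$ into a spectral radius, transport it from $ST$ to $TS$, and then use that the selfadjointness of $ST$ forces the relevant spectrum to be real. Write $\sigma(\cdot)$ for the spectrum and $r(\cdot)$ for the spectral radius. Since $ST$ is selfadjoint, $\|ST\|=r(ST)$. For any bounded operators, $\sigma(ST)$ and $\sigma(TS)$ have the same nonzero part, so $r(ST)=r(TS)$; in particular $\sigma(TS)\subseteq\sigma(ST)\cup\{0\}\subseteq\mathbb{R}$, since $\sigma(ST)\subseteq\mathbb{R}$ ($ST$ being selfadjoint). Hence it suffices to prove $r(TS)\le\|\Re(TS)\|$.

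For this, choose $\lambda\in\sigma(TS)$ with $|\lambda|=r(TS)$, which exists by compactness of the spectrum (if $r(TS)=0$ then $\|ST\|=0$ and there is nothing to prove, so assume $\lambda\neq0$). Because $\sigma(TS)$ lies in the closed disc of radius $r(TS)$ while $\lambda$ lies on its boundary circle, $\lambda$ is a boundary point of $\sigma(TS)$ and therefore belongs to the approximate point spectrum of $TS$; thus there are unit vectors $x_n$ with $(TS-\lambda)x_n\to0$. Then $\langle(TS)x_n,x_n\rangle\to\lambda$, and since $\lambda\in\mathbb{R}$ we get $\langle\Re(TS)x_n,x_n\rangle=\Re\langle(TS)x_n,x_n\rangle\to\lambda$. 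Consequently $\|\Re(TS)\|\ge\lim_n|\langle\Re(TS)x_n,x_n\rangle|=|\lambda|=r(TS)=\|ST\|$, which is the assertion.

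The only non-formal point is the passage ``$\lambda$ of maximal modulus in $\sigma(TS)$ $\Rightarrow$ $\lambda$ is an approximate eigenvalue of $TS$'', for which I would invoke the standard fact $\partial\sigma(A)\subseteq\sigma_{\mathrm{ap}}(A)$. One can also sidestep it by working on $ST$ directly: pick $\lambda\in\sigma(ST)\subseteq\mathbb{R}$ with $|\lambda|=\|ST\|$, use that a selfadjoint operator has empty residual spectrum to obtain unit vectors $x_n$ with $(ST-\lambda)x_n\to0$, and set $y_n=Tx_n/\|Tx_n\|$ along a subsequence on which $\|Tx_n\|$ is bounded away from $0$ (if no such subsequence exists then $STx_n\to0$, forcing $\lambda=0$, the trivial case). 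Since $T(ST-\lambda)x_n=(TS-\lambda)Tx_n\to0$, one has $(TS-\lambda)y_n\to0$, and the same numerical-range estimate gives $\|\Re(TS)\|\ge|\lambda|=\|ST\|$. The remaining ingredients --- $\|A\|=r(A)$ for selfadjoint $A$, $\sigma(AB)\setminus\{0\}=\sigma(BA)\setminus\{0\}$, and $|\langle Ax,x\rangle|\le\|A\|$ for unit $x$ --- are routine.
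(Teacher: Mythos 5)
Your argument is correct. Note, however, that the paper itself offers no proof of this lemma: it is quoted as a known auxiliary result (a lemma of Kittaneh type, with the citation marker lost in the source), so there is no in-paper argument to compare against. Your proof is the standard one in substance --- $\Vert ST\Vert=r(ST)$ by selfadjointness, $r(ST)=r(TS)$ and $\sigma(TS)\subseteq\sigma(ST)\cup\{0\}\subseteq\mathbb{R}$ by the coincidence of nonzero spectra, and then a bound of the real spectral point of maximal modulus by $\Vert\Re(TS)\Vert$ --- except that where the usual published proof invokes the inclusion $\sigma(TS)\subseteq\overline{W(TS)}$ together with $\Re\,\overline{W(A)}=\overline{W(\Re A)}$, you re-derive exactly that estimate by hand via $\partial\sigma(TS)\subseteq\sigma_{\mathrm{ap}}(TS)$ and approximate eigenvectors, computing $\langle\Re(TS)x_n,x_n\rangle=\Re\langle TSx_n,x_n\rangle\to\lambda$. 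Both routes are sound; the numerical-range inclusion is shorter, while your approximate-eigenvector version (including the fallback variant that starts from $(ST-\lambda)x_n\to0$ and pushes forward with $y_n=Tx_n/\Vert Tx_n\Vert$, handling the degenerate case $Tx_n\to0$ correctly) is more elementary and self-contained. All the auxiliary facts you cite ($\Vert A\Vert=r(A)$ for selfadjoint $A$, $\sigma(AB)\setminus\{0\}=\sigma(BA)\setminus\{0\}$, $\partial\sigma(A)\subseteq\sigma_{\mathrm{ap}}(A)$, and $\vert\langle Ax,x\rangle\vert\le\Vert A\Vert$ for unit $x$) are standard, so there is no gap.
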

In the first result, a generalization of the inequality  is obtained.
\begin{theorem}\label{the1}
Let $S, T\in {\mathbb B}({\mathscr H})$ be normal and $f_1,f_2,g_1,g_2$ be  non-negative
continuous functions on $[0,\infty )$, in which $f_1(x)f_2(x)=x$ and $g_1(x)g_2(x)=x\,\,(x\geq 0)$. Then
\begin{eqnarray*}
&&\left\Vert S+T\right\Vert\\&\leq& \frac{1}{2}(\left\Vert S\right\Vert+\left\Vert T\right\Vert)+\frac{1}{2}\min_{t>0}\sqrt{ (\left\Vert S \right\Vert-\left\Vert T\right\Vert)^2+
\left\Vert  \frac{1}{t} f_1(\vert S \vert)g_1(\vert T\vert)+tf_2(\vert S \vert)g_2(\vert T\vert) \right\Vert^2}.
\end{eqnarray*}
\end{theorem}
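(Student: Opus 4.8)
The plan is to reduce first to positive operators and then to write $\bigl\|\,|S|+|T|\,\bigr\|$ as the norm of a product $CD$ of two $2\times2$ block operators, chosen so that the free parameter $t$ is already built in and so that $\Re(DC)$ carries $|S|$ and $|T|$ themselves on its main diagonal. The step $\|CD\|\le\|\Re(DC)\|$ is furnished by Lemma~2.2, after which Lemma~2.1 collapses the estimate to the norm of a $2\times2$ scalar matrix.

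First, since $S$ and $T$ are normal, inequality~\eqref{horn} gives $\|S+T\|\le\bigl\|\,|S|+|T|\,\bigr\|$, so it suffices to bound $\|A+B\|$, where $A:=|S|\ge0$ and $B:=|T|\ge0$, noting that $\|A\|=\|S\|$ and $\|B\|=\|T\|$. Fix $t>0$. Because $f_1(x)f_2(x)=x$ and $g_1(x)g_2(x)=x$ on $[0,\infty)$, the continuous functional calculus gives $f_1(A)f_2(A)=A$ and $g_1(B)g_2(B)=B$. On $\mathscr H\oplus\mathscr H$ set
\[
C=\left[\begin{array}{cc} t\,f_2(A) & g_1(B)\\ 0 & 0\end{array}\right],\qquad
D=\left[\begin{array}{cc} \tfrac1t f_1(A) & 0\\ g_2(B) & 0\end{array}\right].
\]
Writing $X_t:=\tfrac1t f_1(A)g_1(B)+t\,f_2(A)g_2(B)$, a direct computation gives
\[
CD=\left[\begin{array}{cc} A+B & 0\\ 0 & 0\end{array}\right],\qquad
DC=\left[\begin{array}{cc} A & \tfrac1t f_1(A)g_1(B)\\ t\,g_2(B)f_2(A) & B\end{array}\right],\qquad
\Re(DC)=\left[\begin{array}{cc} A & \tfrac12 X_t\\ \tfrac12 X_t^{*} & B\end{array}\right],
\]
so that $CD$ is selfadjoint with $\|CD\|=\|A+B\|$.

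The rest is routine. Since $C,D\in\mathbb B(\mathscr H\oplus\mathscr H)$ and $CD$ is selfadjoint, Lemma~2.2 (valid verbatim on any Hilbert space) gives $\|A+B\|=\|CD\|\le\|\Re(DC)\|$. Applying Lemma~2.1 to the block operator $\Re(DC)$, and using $\|X_t^{*}\|=\|X_t\|$,
\[
\|\Re(DC)\|\le\left\Vert\left[\begin{array}{cc}\|A\| & \tfrac12\|X_t\|\\ \tfrac12\|X_t\| & \|B\|\end{array}\right]\right\Vert
=\frac{\|A\|+\|B\|}{2}+\frac12\sqrt{(\|A\|-\|B\|)^{2}+\|X_t\|^{2}},
\]
the last equality being the formula for the largest singular value of a $2\times2$ symmetric matrix with nonnegative entries. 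Substituting $A=|S|$, $B=|T|$ and letting $t>0$ vary (the left-hand side does not depend on $t$) yields
\[
\|S+T\|\le\frac12\bigl(\|S\|+\|T\|\bigr)+\frac12\min_{t>0}\sqrt{(\|S\|-\|T\|)^{2}+\Bigl\|\tfrac1t f_1(|S|)g_1(|T|)+t\,f_2(|S|)g_2(|T|)\Bigr\|^{2}},
\]
which is the assertion.

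The only genuinely delicate point is discovering the factorization above. The obvious alternative of realising $A+B$ through $WW^{*}$ for a single block operator $W$ built from the $f_i(A)$ and $g_i(B)$ forces diagonal blocks whose norms are at least $2\|A\|$ and $2\|B\|$ (by the operator arithmetic--geometric mean inequality), which is far too lossy to reach the stated bound. Passing instead through the product $CD$ and invoking Lemma~2.2 is precisely what keeps $A$ and $B$ on the diagonal of $\Re(DC)$; once that is in place, the remaining items---the identities $f_1(A)f_2(A)=A$ and $g_1(B)g_2(B)=B$, the equality of the two off-diagonal entries of $\Re(DC)$ up to adjoints, and the $2\times2$ scalar-matrix norm---are all straightforward.
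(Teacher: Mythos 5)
Your proof is correct and follows essentially the same route as the paper: the identical $2\times2$ block factorization of $A+B$, Lemma~2.2 to pass to $\Re(DC)$, Lemma~2.1 to reduce to a $2\times2$ scalar matrix, and Horn's inequality $\Vert S+T\Vert\le\Vert\,\vert S\vert+\vert T\vert\,\Vert$ to handle normal operators (you merely invoke it at the start rather than at the end, as the paper does).
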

\begin{proof}
Let $S$ and $T$ be positive operators. Then for all $t>0$ we have
\begin{eqnarray*}
&&\left\Vert S+T\right\Vert\\&=&
\left\Vert\left[
\begin{array}{cc}
S+T & 0 \\
0 & 0%
\end{array}%
\right]\right\Vert\\&=&
\left\Vert\left[
\begin{array}{cc}
t f_2(S) & g_1(T) \\
0 & 0%
\end{array}%
\right]
\left[
\begin{array}{cc}
\frac{1}{t} f_1(S) & 0 \\
g_2(T) & 0%
\end{array}%
\right]\right\Vert\\&\leq&
\left\Vert\Re\left(
\left[
\begin{array}{cc}
\frac{1}{t} f_1(S) & 0 \\
g_2(T) & 0%
\end{array}%
\right]\left[
\begin{array}{cc}
t f_2(S) & g_1(T) \\
0 & 0%
\end{array}%
\right]\right)\right\Vert\\&&\qquad\qquad\qquad\qquad\qquad\qquad(\textrm{by Lemma })\\&=&
\left\Vert\left[
\begin{array}{cc}
S & \frac{1}{2}(\frac{1}{t}f_1(S)g_1(T)+tf_2(S)g_2(T)) \\
\frac{1}{2}(tf_2(S)g_2(T)+\frac{1}{t}f_1(S)g_1(T)) & T%
\end{array}%
\right]
\right\Vert\\&\leq&
\left\Vert\left[
\begin{array}{cc}
\Vert S\Vert & \frac{1}{2}\Vert\frac{1}{t}f_1(S)g_1(T)+tf_2(S)g_2(T)\Vert \\
\frac{1}{2}\Vert tf_2(S)g_2(T)+\frac{1}{t}f_1(S)g_1(T)\Vert & \Vert T\Vert%
\end{array}%
\right]\right\Vert
\\&&\qquad\qquad\qquad\qquad\qquad\qquad(\textrm{by Lemma })\\&=&
r\left(\left[
\begin{array}{cc}
\Vert S\Vert & \frac{1}{2}\Vert\frac{1}{t}f_1(S)g_1(T)+tf_2(S)g_2(T)\Vert \\
\frac{1}{2}\Vert\frac{1}{t}f_2(S)g_2(T)+tf_1(S)g_1(T)\Vert & \Vert T\Vert%
\end{array}%
\right]
\right)\\&&\qquad\qquad\qquad\qquad\qquad\qquad(\textrm{since is Hermitian matrix})\\&=&
\frac{1}{2}(\left\Vert S\right\Vert+\left\Vert T\right\Vert)+\frac{1}{2}\sqrt{ (\left\Vert S \right\Vert-\left\Vert T\right\Vert)^2+
\left\Vert  \frac{1}{t} f_1(\vert S \vert)g_1(\vert T\vert)+tf_2(\vert S \vert)g_2(\vert T\vert) \right\Vert^2},
\end{eqnarray*}
whence
\begin{eqnarray*}
&&\left\Vert S+T\right\Vert\\&\leq&
\frac{1}{2}(\left\Vert S\right\Vert+\left\Vert T\right\Vert)+\frac{1}{2}\min_{t>0}\sqrt{ (\left\Vert S \right\Vert-\left\Vert T\right\Vert)^2+
\left\Vert  \frac{1}{t} f_1(\vert S \vert)g_1(\vert T\vert)+tf_2(\vert S \vert)g_2(\vert T\vert) \right\Vert^2}
\end{eqnarray*}
for positive operators $S$ and $T$. Now, if $S$ and $T$ are normal operators, then by using inequality we have
\begin{eqnarray*}
&&\left\Vert S+T\right\Vert\\&\leq&
\left\Vert \vert S\vert+\vert T\vert\right\Vert\qquad(\textrm{by inequality })
\\&\leq&
\frac{1}{2}(\left\Vert \vert S\vert\right\Vert+\left\Vert \vert T\vert\right\Vert)+\frac{1}{2}\min_{t>0}\sqrt{ (\left\Vert \vert S\vert \right\Vert-\left\Vert \vert T\vert\right\Vert)^2+
\left\Vert  \frac{1}{t} f_1(\vert S \vert)g_1(\vert T\vert)+tf_2(\vert S \vert)g_2(\vert T\vert) \right\Vert}
\\&=&
\frac{1}{2}(\left\Vert S\right\Vert+\left\Vert T\right\Vert)+\frac{1}{2}\min_{t>0}\sqrt{ (\left\Vert S \right\Vert-\left\Vert T\right\Vert)^2+
\left\Vert  \frac{1}{t} f_1(\vert S \vert)g_1(\vert T\vert)+tf_2(\vert S \vert)g_2(\vert T\vert) \right\Vert}.
\end{eqnarray*}
\end{proof}
As a consequence of Theorem  we have the following result.
\begin{corollary}\label{cor1}
Let $S, T\in {\mathbb B}({\mathscr H})$ be normal, $f_1,f_2,g_1,g_2$ be  non-negative
continuous functions on $[0,\infty )$ such that $f_1(x)f_2(x)=x$ and $g_1(x)g_2(x)=x\,\,(x\geq 0)$. Then
\begin{eqnarray*}
\left\Vert S+T\right\Vert&\leq& \frac{1}{2}(\left\Vert S\right\Vert+\left\Vert T\right\Vert)+\frac{1}{2}\sqrt{ (\left\Vert S \right\Vert-\left\Vert T\right\Vert)^2+
4\left\Vert  f_1(\vert S \vert)g_1(\vert T\vert)\right\Vert \left\Vert f_2(\vert S \vert)g_2(\vert T\vert) \right\Vert}.
\end{eqnarray*}
\end{corollary}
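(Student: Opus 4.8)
The plan is to deduce Corollary~\ref{cor1} from Theorem~\ref{the1} by estimating the norm under the square root for one judicious choice of the parameter $t$. Set $a:=\Vert f_1(\vert S\vert)g_1(\vert T\vert)\Vert$ and $b:=\Vert f_2(\vert S\vert)g_2(\vert T\vert)\Vert$. The first step is just the triangle inequality: for every $t>0$,
\begin{eqnarray*}
\left\Vert \frac{1}{t}f_1(\vert S\vert)g_1(\vert T\vert)+tf_2(\vert S\vert)g_2(\vert T\vert)\right\Vert\leq \frac{a}{t}+tb.
\end{eqnarray*}
Since, for fixed $c\geq0$, the map $x\mapsto\sqrt{c+x^{2}}$ is non-decreasing on $[0,\infty)$, this yields, for every $t>0$,
\begin{eqnarray*}
\sqrt{(\Vert S\Vert-\Vert T\Vert)^{2}+\left\Vert \frac{1}{t}f_1(\vert S\vert)g_1(\vert T\vert)+tf_2(\vert S\vert)g_2(\vert T\vert)\right\Vert^{2}}\leq \sqrt{(\Vert S\Vert-\Vert T\Vert)^{2}+\Big(\frac{a}{t}+tb\Big)^{2}}.
\end{eqnarray*}

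The second step is to optimise over $t$. By the arithmetic--geometric mean inequality $\frac{a}{t}+tb\geq2\sqrt{ab}$, with equality at $t=\sqrt{a/b}$ when $a,b>0$; hence the right-hand side above attains its minimum $\sqrt{(\Vert S\Vert-\Vert T\Vert)^{2}+4ab}$ at that value of $t$. Evaluating the bound of Theorem~\ref{the1} at $t=\sqrt{a/b}$ (and using $\min_{t>0}F(t)\leq F(\sqrt{a/b})$) therefore gives
\begin{eqnarray*}
\Vert S+T\Vert\leq \frac{1}{2}(\Vert S\Vert+\Vert T\Vert)+\frac{1}{2}\sqrt{(\Vert S\Vert-\Vert T\Vert)^{2}+4ab},
\end{eqnarray*}
which is precisely the claimed inequality. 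In the degenerate case $ab=0$ (say $b=0$, so that $f_2(\vert S\vert)g_2(\vert T\vert)=0$ and the asserted right-hand side collapses to $\max\{\Vert S\Vert,\Vert T\Vert\}$) one instead lets $t\to\infty$ in Theorem~\ref{the1}; this should be dispatched in a single line, and the case $a=0$ is symmetric.

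I do not anticipate a real obstacle: Corollary~\ref{cor1} is a routine specialisation of Theorem~\ref{the1}. The only subtlety worth flagging is that the minimum in Theorem~\ref{the1} is taken of the whole square-root expression rather than of the inner norm, so one must pass through the monotonicity of $x\mapsto\sqrt{c+x^{2}}$ before invoking AM--GM (equivalently, simply plug in the single admissible value $t=\sqrt{a/b}$). It is also worth remarking that Corollary~\ref{cor1} is genuinely weaker than Theorem~\ref{the1}, since it amounts to replacing $\min_{t>0}\Vert\frac{1}{t}A+tB\Vert$ by its crude upper bound $2\sqrt{\Vert A\Vert\,\Vert B\Vert}$ obtained from the triangle inequality.
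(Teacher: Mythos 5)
Your proposal is correct and follows essentially the same route as the paper: apply Theorem~\ref{the1}, use the triangle inequality inside the square root, and then optimise $\frac{a}{t}+tb$ via the arithmetic--geometric mean inequality to get $2\sqrt{ab}$. Your extra care with the monotonicity of $x\mapsto\sqrt{c+x^{2}}$ and the degenerate case $ab=0$ (where the infimum is not attained) is slightly more scrupulous than the paper's one-line appeal to $\min_{t>0}(\frac{a}{t}+tb)=2\sqrt{ab}$, but the argument is the same.
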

\begin{proof}
If $S$ and $T$ are normal operators, then
\begin{eqnarray*}
&&\left\Vert S+T\right\Vert\\&\leq& \frac{1}{2}(\left\Vert S\right\Vert+\left\Vert T\right\Vert)+\frac{1}{2}\min_{t>0}\sqrt{ (\left\Vert S \right\Vert-\left\Vert T\right\Vert)^2+
\left\Vert  \frac{1}{t} f_1(\vert S \vert)g_1(\vert T\vert)+tf_2(\vert S \vert)g_2(\vert T\vert) \right\Vert^2}
\\&\leq&
\frac{1}{2}(\left\Vert S\right\Vert+\left\Vert T\right\Vert)+\frac{1}{2}\min_{t>0}\sqrt{ (\left\Vert S \right\Vert-\left\Vert T\right\Vert)^2+
\left(\frac{1}{t}\left\Vert   f_1(\vert S \vert)g_1(\vert T\vert)\right\Vert+t\left\Vert f_2(\vert S \vert)g_2(\vert T\vert) \right\Vert\right)^2}
\\&=&
\frac{1}{2}(\left\Vert S\right\Vert+\left\Vert T\right\Vert)+\frac{1}{2}\sqrt{ (\left\Vert S \right\Vert-\left\Vert T\right\Vert)^2+
4\left\Vert  f_1(\vert S \vert)g_1(\vert T\vert)\right\Vert \left\Vert f_2(\vert S \vert)g_2(\vert T\vert) \right\Vert}.
\end{eqnarray*}
The last equation follows from $\min_{t>0} h(t)=\min_{t>0} (\frac{1}{t}a+tb)=2\sqrt{ab}\,\,(a,b\geq0)$.
\end{proof}
For invertible normal operators we get the next result.
\begin{corollary}\label{cor2}
Let $S, T\in {\mathbb B}({\mathscr H})$ be invertible normal and $f, g$ be two positive
continuous functions on $[0,\infty )$. Then
\begin{eqnarray*}
&&\left\Vert S+T\right\Vert\\&\leq& \frac{1}{2}(\left\Vert S\right\Vert+\left\Vert T\right\Vert)+\frac{1}{2}\min_{t>0}\sqrt{ (\left\Vert S \right\Vert-\left\Vert T\right\Vert)^2+
\left\Vert  \frac{1}{t} f(\vert S \vert)g(\vert T\vert)+t\vert S \vert (f(\vert S \vert))^{-1}\vert T\vert (g(\vert T\vert))^{-1} \right\Vert^2}.
\end{eqnarray*}
In particular,
\begin{eqnarray*}
\left\Vert S+T\right\Vert\leq \frac{1}{2}(\left\Vert S\right\Vert+\left\Vert T\right\Vert)+\frac{1}{2}\min_{t>0}\sqrt{ (\left\Vert S \right\Vert-\left\Vert T\right\Vert)^2+
\left\Vert  \frac{1}{t} \vert S \vert^r\vert T\vert^s+t\vert S \vert^{1-r} \vert T\vert^{1-s}  \right\Vert^2},
\end{eqnarray*}
where $r,s\in\mathbb{R}$.
\end{corollary}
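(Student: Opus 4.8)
The plan is to derive both assertions directly from Theorem~\ref{the1} by a judicious choice of the four functions appearing there. For the first inequality I would set $f_1 = f$ and $g_1 = g$, and define $f_2(x) = x\,(f(x))^{-1}$ and $g_2(x) = x\,(g(x))^{-1}$ for $x \geq 0$. Since $f$ and $g$ are positive and continuous on $[0,\infty)$, the functions $f_2$ and $g_2$ are non-negative and continuous on $[0,\infty)$, and by construction $f_1(x)f_2(x) = x$ and $g_1(x)g_2(x) = x$, so the hypotheses of Theorem~\ref{the1} are met. Because $S$ and $T$ are invertible normal operators, $|S|$ and $|T|$ are invertible positive operators, so by the continuous functional calculus $f(|S|)$ and $g(|T|)$ are invertible and $f_2(|S|) = |S|\,(f(|S|))^{-1}$, $g_2(|T|) = |T|\,(g(|T|))^{-1}$. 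Substituting these choices into the conclusion of Theorem~\ref{the1} yields the first displayed inequality.

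For the ``in particular'' part, fix $r, s \in \mathbb{R}$. The spectra of $|S|$ and $|T|$ are compact subsets of $(0,\infty)$, hence contained in some interval $[\delta, M]$ with $0 < \delta \leq M$. I would choose positive continuous functions $f, g$ on $[0,\infty)$ that agree with $x \mapsto x^r$ and $x \mapsto x^s$, respectively, on $[\delta, M]$ (extend the monomials, which are positive and continuous on that compact subinterval, to positive continuous functions on all of $[0,\infty)$). By the functional calculus, $f(|S|) = |S|^r$ and $g(|T|) = |T|^s$, whence $f_2(|S|) = |S|\,|S|^{-r} = |S|^{1-r}$ and $g_2(|T|) = |T|\,|T|^{-s} = |T|^{1-s}$. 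Plugging these into the first inequality gives the stated estimate involving $|S|^r|T|^s$ and $|S|^{1-r}|T|^{1-s}$.

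I do not expect a genuine obstacle here; the only point requiring care is the ``in particular'' case, where for $r \leq 0$ or $r \geq 1$ (and likewise for $s$) the monomial $x^r$ need not be positive or even defined at $x=0$, so one cannot take $f(x) = x^r$ literally on $[0,\infty)$. This is circumvented by the invertibility of $S$ and $T$, which confines the relevant spectra to a compact subinterval of $(0,\infty)$ on which $x^r$ is well behaved, together with the fact that the functional calculus depends only on the values of a function on the spectrum. One should also note in passing that $f(|S|)$ is automatically invertible — which is what makes $(f(|S|))^{-1}$ meaningful — because $f$ is bounded below by a positive constant on the compact spectrum of $|S|$.
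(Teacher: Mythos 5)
Your proposal follows essentially the same route as the paper: substitute $f_1=f$, $f_2(x)=x/f(x)$, $g_1=g$, $g_2(x)=x/g(x)$ into Theorem~\ref{the1}, and then specialize to $f(x)=x^r$, $g(x)=x^s$ for the second inequality. Your extra care in the ``in particular'' step---modifying $x^r$, $x^s$ off the (compact, strictly positive) spectra of $\vert S\vert$, $\vert T\vert$ so that the functions are genuinely positive and continuous on all of $[0,\infty)$ when $r,s\notin[0,1]$---is a legitimate refinement of a point the paper passes over silently, but it does not change the argument.
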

\begin{proof}
For the first inequality, it is enough to put $f_1(t)=f(t)$, $f_2(t)=\frac{t}{f(t)}$, $g_1(t)=g(t)$ and  $g_2(t)=\frac{t}{g(t)}\,\,(t>0)$ in Theorem . In particular, if $f(t)=t^r$ and $g(t)=t^s\,\,(r,s\in\mathbb{R})$ we get the second inequality.
\end{proof}

\begin{corollary}\label{cor2}
Let $S, T\in {\mathbb B}({\mathscr H})$ be invertible normal and $f, g$ be two positive
continuous functions on $[0,\infty )$. Then
\begin{eqnarray*}
\left\Vert S+T\right\Vert\leq \max\{\left\Vert S\right\Vert+\left\Vert T\right\Vert\}+
\left\Vert   f(\vert S \vert)g(\vert T\vert)\right\Vert^\frac{1}{2}\left\Vert\vert S \vert (f(\vert S \vert))^{-1}\vert T\vert (g(\vert T\vert))^{-1} \right\Vert^\frac{1}{2}.
\end{eqnarray*}
In particular,
\begin{eqnarray}\label{kit1}
\left\Vert S+T\right\Vert\leq \max\{\left\Vert S\right\Vert,\left\Vert T\right\Vert\}+
\left\Vert   \vert S \vert^r\vert T\vert^{1-s}\right\Vert^\frac{1}{2}\left\Vert\vert S \vert^{1-r} \vert T\vert^{s} \right\Vert^\frac{1}{2},
\end{eqnarray}
where $r,s\in\mathbb{R}$.
\end{corollary}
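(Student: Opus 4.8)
The plan is to obtain this corollary as a short consequence of Corollary~\ref{cor1}, combined with the elementary inequality $\sqrt{a^{2}+b^{2}}\le a+b$ valid for $a,b\ge 0$. Since $f$ and $g$ are positive and continuous on $[0,\infty)$, the functions $x\mapsto x(f(x))^{-1}$ and $x\mapsto x(g(x))^{-1}$ are non-negative and continuous on $[0,\infty)$, and $f(|S|),g(|T|)$ are invertible.

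First I would apply Corollary~\ref{cor1} with $f_{1}(x)=f(x)$, $f_{2}(x)=x(f(x))^{-1}$, $g_{1}(x)=g(x)$ and $g_{2}(x)=x(g(x))^{-1}$; these are non-negative continuous functions on $[0,\infty)$ satisfying $f_{1}(x)f_{2}(x)=x$ and $g_{1}(x)g_{2}(x)=x$. Writing $A:=\left\Vert f(|S|)g(|T|)\right\Vert$ and $B:=\left\Vert |S|(f(|S|))^{-1}|T|(g(|T|))^{-1}\right\Vert$, Corollary~\ref{cor1} yields
\[
\left\Vert S+T\right\Vert\le \tfrac12\bigl(\left\Vert S\right\Vert+\left\Vert T\right\Vert\bigr)+\tfrac12\sqrt{\bigl(\left\Vert S\right\Vert-\left\Vert T\right\Vert\bigr)^{2}+4AB}.
\]
Then, using $\sqrt{\bigl(\left\Vert S\right\Vert-\left\Vert T\right\Vert\bigr)^{2}+4AB}\le\bigl|\left\Vert S\right\Vert-\left\Vert T\right\Vert\bigr|+2\sqrt{AB}$ and the identity $\tfrac12\bigl(\left\Vert S\right\Vert+\left\Vert T\right\Vert\bigr)+\tfrac12\bigl|\left\Vert S\right\Vert-\left\Vert T\right\Vert\bigr|=\max\{\left\Vert S\right\Vert,\left\Vert T\right\Vert\}$, I would conclude
\[
\left\Vert S+T\right\Vert\le\max\{\left\Vert S\right\Vert,\left\Vert T\right\Vert\}+A^{1/2}B^{1/2},
\]
which is the asserted inequality.

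For the particular case \eqref{kit1} I would specialize to $f(x)=x^{r}$ and $g(x)=x^{1-s}$, so that $f(|S|)g(|T|)=|S|^{r}|T|^{1-s}$ and $|S|(f(|S|))^{-1}|T|(g(|T|))^{-1}=|S|^{1-r}|T|^{s}$; inserting these into the inequality just proved gives \eqref{kit1}.

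I do not expect a genuine obstacle: the content is simply the passage from the averaged estimate of Corollary~\ref{cor1} to a $\max$-type estimate via $\sqrt{a^{2}+b^{2}}\le a+b$. The only mild technicality concerns \eqref{kit1} when $r\notin[0,1]$ or $s\notin[0,1]$, where $x^{r}$ or $x^{1-s}$ need not extend to a continuous function on $[0,\infty)$; this is harmless because $S$ and $T$ are invertible, so the spectra of $|S|$ and $|T|$ lie in a compact subinterval of $(0,\infty)$, and $f,g$ may be replaced by positive continuous functions on $[0,\infty)$ agreeing with the relevant powers there without affecting any of the operators appearing in the bound.
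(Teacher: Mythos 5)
Your proof is correct and takes essentially the same route as the paper: the paper argues from Theorem \ref{the1} with the very substitutions $f_1=f$, $f_2(x)=x(f(x))^{-1}$, $g_1=g$, $g_2(x)=x(g(x))^{-1}$, then uses $\sqrt{x^2+y^2}\le |x|+|y|$ and minimizes over $t$ by the AM--GM identity, which is your derivation via Corollary \ref{cor1} with the same two elementary steps performed in the opposite order. Your closing remark justifying the case $r,s\notin[0,1]$ through the invertibility of $S$ and $T$ is a small point of care that the paper passes over silently.
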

\begin{proof}
If $S, T\in {\mathbb B}({\mathscr H})$ is invertible normal and $f, g$ are two positive
continuous functions on $[0,\infty )$, then by using Theorem  we have
\begin{eqnarray*}
&&\left\Vert S+T\right\Vert
\\&\leq&
\frac{1}{2}(\left\Vert S\right\Vert+\left\Vert T\right\Vert)+\frac{1}{2}\sqrt{ (\left\Vert S \right\Vert-\left\Vert T\right\Vert)^2+
\left\Vert  \frac{1}{t} f(\vert S \vert)g(\vert T\vert)+t\vert S \vert (f(\vert S \vert))^{-1}\vert T\vert (g(\vert T\vert))^{-1} \right\Vert^2}
\\&\leq&
\frac{1}{2}(\left\Vert S\right\Vert+\left\Vert T\right\Vert)+\frac{1}{2} \vert\left\Vert S \right\Vert-\left\Vert T\right\Vert\vert+
\frac{1}{2}\left\Vert  \frac{1}{t} f(\vert S \vert)g(\vert T\vert)+t\vert S \vert (f(\vert S \vert))^{-1}\vert T\vert (g(\vert T\vert))^{-1} \right\Vert
\\&&\qquad\qquad\qquad\qquad\qquad\qquad(\textrm{by the inequality}\,\,\sqrt{x^2+y^2}\leq\sqrt{x^2}+\sqrt{y^2})
\\&=&
\max\{\Vert S\Vert, \Vert T\Vert\}+
\left\Vert  \frac{1}{t} f(\vert S \vert)g(\vert T\vert)+t\vert S \vert (f(\vert S \vert))^{-1}\vert T\vert (g(\vert T\vert))^{-1} \right\Vert
\\&\leq&
\max\{\Vert S\Vert, \Vert T\Vert\}+
\frac{1}{2}\left(\frac{1}{t}\left\Vert   f(\vert S \vert)g(\vert T\vert)\right\Vert +t\left\Vert  \vert S \vert (f(\vert S \vert))^{-1}\vert T\vert (g(\vert T\vert))^{-1} \right\Vert\right).
\end{eqnarray*}
Now, we take the minimum on $t\,\,(t>0)$ and we get
  \begin{eqnarray*}
&&\left\Vert S+T\right\Vert
\\&\leq&
\max\{\Vert S\Vert, \Vert T\Vert\}+
\frac{1}{2}\min_{t>0}\left(\frac{1}{t}\left\Vert   f(\vert S \vert)g(\vert T\vert)\right\Vert +t\left\Vert  \vert S \vert (f(\vert S \vert))^{-1}\vert T\vert (g(\vert T\vert))^{-1} \right\Vert\right)
\\&=&
\max\{\left\Vert S\right\Vert,\left\Vert T\right\Vert\}+
\left\Vert   f(\vert S \vert)g(\vert T\vert)\right\Vert^\frac{1}{2}\left\Vert\vert S \vert (f(\vert S \vert))^{-1}\vert T\vert (g(\vert T\vert))^{-1} \right\Vert^\frac{1}{2}.
\end{eqnarray*}
For the second inequality, put $f(t)=t^r$ and $g(t)=t^{1-s}\,\,(r,s\in\mathbb{R})$ in the first inequality.
\end{proof}
\begin{remark}
If $S, T\in {\mathbb B}({\mathscr H})$ are normal operator, then inequality is a refinement and generalization  of inequality  for normal operators. In fact, in this case
\begin{eqnarray*}
\left\Vert S+T\right\Vert
&\leq&
\max\{\left\Vert S\right\Vert,\left\Vert T\right\Vert\}+
\left\Vert   \vert S \vert^r\vert T\vert^{1-s}\right\Vert^\frac{1}{2}\left\Vert\vert S \vert^{1-r} \vert T\vert^{s} \right\Vert^\frac{1}{2}
\\&\leq&
\max\{\left\Vert S\right\Vert,\left\Vert T\right\Vert\}+
\frac{1}{2}\left(\left\Vert   \vert S \vert^r\vert T\vert^{1-s}\right\Vert+\left\Vert\vert S \vert^{1-r} \vert T\vert^{s} \right\Vert\right)
\\&&\qquad\qquad\qquad\qquad(\textrm{by the arithmetic-geometric  inequality}),
\end{eqnarray*}
where $r,s\in[0,1]$.
\end{remark}

In the next result, we show a generalization of the inequality   for arbitrary operators in $S, T\in{\mathbb B}({\mathscr H})$.
\begin{theorem}\label{the2}
Let $S, T\in {\mathbb B}({\mathscr H})$ and  $f_1,f_2,g_1,g_2$ be  non-negative
continuous functions on $[0,\infty )$, in which $f_1(x)f_2(x)=x$ and $g_1(x)g_2(x)=x\,\,(x\geq 0)$. Then
\begin{eqnarray*}
\left\Vert S+T\right\Vert\leq \frac{1}{2}(\left\Vert S\right\Vert+\left\Vert T\right\Vert)+\frac{1}{2}\sqrt{ (\left\Vert S \right\Vert-\left\Vert T\right\Vert)^2+
\max\{\alpha,\beta\}},
\end{eqnarray*}
where $\alpha=\left\Vert  \frac{1}{t} f_1(\vert S^* \vert)g_1(\vert T^*\vert)+tf_2(\vert S^* \vert)g_2(\vert T^*\vert) \right\Vert^2$ and $\beta=\left\Vert  \frac{1}{t} f_1(\vert S \vert)g_1(\vert T\vert)+tf_2(\vert S \vert)g_2(\vert T\vert) \right\Vert^2$.
\end{theorem}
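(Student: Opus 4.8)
The plan is to reduce the statement to the normal (indeed selfadjoint) case already proved in Theorem \ref{the1}, by passing to the $2\times 2$ off-diagonal block operators
\[
\widehat S=\begin{bmatrix} 0 & S\\ S^{*} & 0\end{bmatrix},\qquad
\widehat T=\begin{bmatrix} 0 & T\\ T^{*} & 0\end{bmatrix}\in{\mathbb B}({\mathscr H}\oplus{\mathscr H}).
\]
Both $\widehat S$ and $\widehat T$ are selfadjoint, hence normal, and $\widehat S+\widehat T=\begin{bmatrix} 0 & S+T\\ (S+T)^{*} & 0\end{bmatrix}$. Squaring an operator of the form $\begin{bmatrix} 0 & X\\ X^{*} & 0\end{bmatrix}$ yields $\mathrm{diag}(XX^{*},X^{*}X)$, and since such an operator is selfadjoint this gives at once $\|\widehat S\|=\|S\|$, $\|\widehat T\|=\|T\|$, $\|\widehat S+\widehat T\|=\|S+T\|$, and also $|\widehat S|=\mathrm{diag}(|S^{*}|,|S|)$ and $|\widehat T|=\mathrm{diag}(|T^{*}|,|T|)$.

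Next I would record how the functional calculus interacts with this block structure: since the continuous functional calculus acts blockwise on block-diagonal positive operators, for $i=1,2$ we have $f_i(|\widehat S|)=\mathrm{diag}(f_i(|S^{*}|),f_i(|S|))$ and $g_i(|\widehat T|)=\mathrm{diag}(g_i(|T^{*}|),g_i(|T|))$. Hence, for every $t>0$, the operator $\tfrac1t f_1(|\widehat S|)g_1(|\widehat T|)+t f_2(|\widehat S|)g_2(|\widehat T|)$ is block-diagonal, with diagonal blocks $\tfrac1t f_1(|S^{*}|)g_1(|T^{*}|)+t f_2(|S^{*}|)g_2(|T^{*}|)$ and $\tfrac1t f_1(|S|)g_1(|T|)+t f_2(|S|)g_2(|T|)$; as the norm of a block-diagonal operator is the maximum of the norms of its blocks, its norm equals $\max\{\sqrt{\alpha},\sqrt{\beta}\}$, whose square is $\max\{\alpha,\beta\}$.

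It then remains only to apply Theorem \ref{the1} to the normal operators $\widehat S,\widehat T$ with the same functions $f_1,f_2,g_1,g_2$ and to substitute the identities just recorded:
\[
\|S+T\|=\|\widehat S+\widehat T\|\le \tfrac12(\|S\|+\|T\|)+\tfrac12\min_{t>0}\sqrt{(\|S\|-\|T\|)^{2}+\max\{\alpha,\beta\}},
\]
and discarding the minimum gives the claimed inequality for each fixed $t>0$. I do not expect any genuine obstacle here; the only points that must be handled carefully are the block identity $|\widehat S|=\mathrm{diag}(|S^{*}|,|S|)$ (together with the analogous one for $\widehat T$) and the fact that the norm of a block-diagonal operator is the larger of the two block norms — it is precisely these facts that convert the single norm appearing in Theorem \ref{the1} into the quantity $\max\{\alpha,\beta\}$ of the present statement.
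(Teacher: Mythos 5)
Your proposal is correct and is essentially the same argument as the paper's: both pass to the selfadjoint dilations $\left[\begin{smallmatrix} 0 & S\\ S^{*} & 0\end{smallmatrix}\right]$, $\left[\begin{smallmatrix} 0 & T\\ T^{*} & 0\end{smallmatrix}\right]$, apply Theorem \ref{the1}, and use that the functional calculus acts blockwise on $|\widehat S|=\mathrm{diag}(|S^{*}|,|S|)$ so that the relevant norm becomes $\max\{\sqrt{\alpha},\sqrt{\beta}\}$. No issues.
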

\begin{proof}
Let $\tilde{S}=\left[
\begin{array}{cc}
0 & S \\
S^* & 0%
\end{array}%
\right]$
and
$\tilde{T}=\left[
\begin{array}{cc}
0 & T \\
T^* & 0%
\end{array}%
\right]$. Then $\tilde{S}$ and $\tilde{T}$ are normal operators and
\[
\Vert \tilde{S}+\tilde{T}\Vert=\left\Vert \left[
\begin{array}{cc}
0 & S+T \\
S^*+S^* & 0%
\end{array}%
\right]\right\Vert=\Vert S+T\Vert.
\]
Hence, applying Theorem  we get
\begin{eqnarray}\label{prod}
&&\left\Vert S+T\right\Vert
=
\Vert \tilde{S}+\tilde{T}\Vert\nonumber
\\&\leq&
\frac{1}{2}(\Vert \tilde{S}\Vert+\Vert \tilde{T}\Vert)+\frac{1}{2}\sqrt{ (\Vert \tilde{S} \Vert-\Vert \tilde{T}\Vert)^2+
\left\Vert  \frac{1}{t} f_1(\vert \tilde{S} \vert)g_1(\vert \tilde{T}\vert)+tf_2(\vert \tilde{S} \vert)g_2(\vert \tilde{T}\vert) \right\Vert^2}.
\end{eqnarray}
Moreover, it follows from	$\Vert \tilde{S}\Vert=\Vert S\Vert$, $\Vert \tilde{T}\Vert=\Vert T\Vert$ and
\[
f(\vert \tilde{S} \vert)=\left[
\begin{array}{cc}
f(\vert S^*\vert)&0 \\
0 & f(\vert S\vert)%
\end{array}%
\right],\quad f(\vert \tilde{T} \vert)=\left[
\begin{array}{cc}
f(\vert T^*\vert)&0 \\
0 & f(\vert T\vert)%
\end{array}%
\right]
\]
that we get
\begin{eqnarray*}
&&\left\Vert  \frac{1}{t} f_1(\vert \tilde{S} \vert)g_1(\vert \tilde{T}\vert)+tf_2(\vert \tilde{S} \vert)g_2(\vert \tilde{T}\vert) \right\Vert
\\&=&
\left\Vert\left[
\begin{array}{cc}
\frac{1}{t} f_1(\vert {S^*} \vert)g_1(\vert {T^*}\vert)+tf_2(\vert {S^*} \vert)g_2(\vert {T^*}\vert)&0 \\
0 & \frac{1}{t} f_1(\vert {S} \vert)g_1(\vert {T}\vert)+tf_2(\vert {S} \vert)g_2(\vert {T}\vert)%
\end{array}%
\right]\right\Vert
\\&=&
\max\left\{\left\Vert\frac{1}{t} f_1(\vert {S^*} \vert)g_1(\vert {T^*}\vert)+tf_2(\vert {S^*} \vert)g_2(\vert {T^*}\vert)\right\Vert,\left\Vert\frac{1}{t} f_1(\vert {S} \vert)g_1(\vert {T}\vert)+tf_2(\vert {S} \vert)g_2(\vert {T}\vert)\right\Vert\right\}.
\end{eqnarray*}
Using the recent equality and inequality  we reach the desired result.
\end{proof}
Applying Theorem  and a same argument in the proof of Corollary  we get the next result.
\begin{corollary}
Let $S, T\in {\mathbb B}({\mathscr H})$. Then
\begin{eqnarray*}
&&\left\Vert S+T\right\Vert\\&\leq&
\max\left\{\left\Vert S\right\Vert,\left\Vert T\right\Vert\right\}+
\max \left\{ \Vert  \vert S^* \vert^r\vert T^*\vert^{1-s}\Vert^{\frac{1}{2}}\Vert\vert S^* \vert^{1-r}\vert T^*\vert^s \Vert^{\frac{1}{2}}, \Vert\vert S \vert^r\vert T\vert^{1-s}\Vert^{\frac{1}{2}} \Vert \vert S \vert^{1-r}\vert T\vert^s\Vert^{\frac{1}{2}}  \right\},
\end{eqnarray*}
where $r,s\in[0,1]$. In particular,
\begin{eqnarray}\label{num}
\left\Vert S+T\right\Vert\leq
\max\left\{\left\Vert S\right\Vert,\left\Vert T\right\Vert\right\}+
\max \left\{ \Vert  \vert S^* \vert^{\frac{1}{2}}\vert T^*\vert^{\frac{1}{2}}\Vert, \Vert\vert S \vert^{\frac{1}{2}}\vert T\vert^{\frac{1}{2}}\Vert   \right\}.
\end{eqnarray}
\end{corollary}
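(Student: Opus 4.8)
The plan is to specialize Theorem~\ref{the2} and then run, with four functions in place of one, the same manipulations used in the proof of Corollary~\ref{cor2}. Concretely, I would apply Theorem~\ref{the2} with $f_1(x)=x^{r}$, $f_2(x)=x^{1-r}$, $g_1(x)=x^{1-s}$, $g_2(x)=x^{s}$: for $r,s\in[0,1]$ these are non-negative continuous functions on $[0,\infty)$ with $f_1(x)f_2(x)=x$ and $g_1(x)g_2(x)=x$, and no invertibility is required (this is why the statement holds for arbitrary $S,T$, unlike Corollary~\ref{cor2}). Theorem~\ref{the2} then gives, for every $t>0$,
\[
\left\Vert S+T\right\Vert\le \tfrac12\bigl(\left\Vert S\right\Vert+\left\Vert T\right\Vert\bigr)+\tfrac12\sqrt{\bigl(\left\Vert S\right\Vert-\left\Vert T\right\Vert\bigr)^{2}+\max\{\alpha,\beta\}},
\]
where $\alpha=\bigl\Vert \tfrac1t|S^{\ast}|^{r}|T^{\ast}|^{1-s}+t|S^{\ast}|^{1-r}|T^{\ast}|^{s}\bigr\Vert^{2}$ and $\beta=\bigl\Vert \tfrac1t|S|^{r}|T|^{1-s}+t|S|^{1-r}|T|^{s}\bigr\Vert^{2}$.

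Next I would repeat the chain from the proof of Corollary~\ref{cor2}: apply $\sqrt{x^{2}+y^{2}}\le\sqrt{x^{2}}+\sqrt{y^{2}}$ with $x=\left\Vert S\right\Vert-\left\Vert T\right\Vert$ and $y=\sqrt{\max\{\alpha,\beta\}}$, combine $\tfrac12(\left\Vert S\right\Vert+\left\Vert T\right\Vert)+\tfrac12\bigl\lvert\left\Vert S\right\Vert-\left\Vert T\right\Vert\bigr\rvert=\max\{\left\Vert S\right\Vert,\left\Vert T\right\Vert\}$, and use $\sqrt{\max\{\alpha,\beta\}}=\max\{\sqrt{\alpha},\sqrt{\beta}\}$, to obtain, for every $t>0$,
\[
\left\Vert S+T\right\Vert\le \max\{\left\Vert S\right\Vert,\left\Vert T\right\Vert\}+\tfrac12\max\{\sqrt{\alpha},\sqrt{\beta}\}.
\]
Then I would estimate each square root by the triangle inequality together with submultiplicativity of the operator norm, $\sqrt{\alpha}\le\tfrac1t\bigl\Vert|S^{\ast}|^{r}|T^{\ast}|^{1-s}\bigr\Vert+t\bigl\Vert|S^{\ast}|^{1-r}|T^{\ast}|^{s}\bigr\Vert$ and likewise for $\sqrt{\beta}$, and finally minimize over $t>0$ using $\min_{t>0}\bigl(\tfrac1t a+tb\bigr)=2\sqrt{ab}$ for $a,b\ge0$ on each of the two branches, exactly as in the last line of the proof of Corollary~\ref{cor2}. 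This yields the first displayed inequality.

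For the particular case \eqref{num}, I would set $r=s=\tfrac12$: there $|S^{\ast}|^{r}|T^{\ast}|^{1-s}=|S^{\ast}|^{1-r}|T^{\ast}|^{s}=|S^{\ast}|^{1/2}|T^{\ast}|^{1/2}$, and similarly without adjoints, so each product of the form $\Vert\cdot\Vert^{1/2}\Vert\cdot\Vert^{1/2}$ collapses to a single operator norm and the bound reads $\max\{\left\Vert S\right\Vert,\left\Vert T\right\Vert\}+\max\{\Vert|S^{\ast}|^{1/2}|T^{\ast}|^{1/2}\Vert,\ \Vert|S|^{1/2}|T|^{1/2}\Vert\}$. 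The step I expect to be the main obstacle is the minimization over $t$ in the presence of the outer maximum: before minimizing, the single parameter $t$ sits inside $\max\{\sqrt{\alpha},\sqrt{\beta}\}$, so one has to argue carefully that reducing each branch via $\min_{t>0}(\tfrac1t a+tb)=2\sqrt{ab}$ legitimately leaves the claimed form $\max\bigl\{\Vert|S^{\ast}|^{r}|T^{\ast}|^{1-s}\Vert^{1/2}\Vert|S^{\ast}|^{1-r}|T^{\ast}|^{s}\Vert^{1/2},\ \Vert|S|^{r}|T|^{1-s}\Vert^{1/2}\Vert|S|^{1-r}|T|^{s}\Vert^{1/2}\bigr\}$; this is handled precisely by the bookkeeping used to pass from the displayed chain to its final line in the proof of Corollary~\ref{cor2}. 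Everything else is routine.
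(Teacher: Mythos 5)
Your route is exactly the paper's: the published proof of this corollary is nothing more than an appeal to Theorem \ref{the2} with the power functions followed by ``a same argument in the proof of Corollary \ref{cor2}'', which is what you propose. However, the step you yourself single out as the main obstacle is a genuine gap, and it is \emph{not} handled by the bookkeeping in the proof of Corollary \ref{cor2}: there the quantity minimized over $t$ is a single expression $\frac1t a+tb$, so $\min_{t>0}(\frac1t a+tb)=2\sqrt{ab}$ applies directly, whereas here the one parameter $t$ sits inside a maximum of two such expressions with, in general, different coefficients. Writing $a_1=\Vert\,\vert S^*\vert^r\vert T^*\vert^{1-s}\Vert$, $b_1=\Vert\,\vert S^*\vert^{1-r}\vert T^*\vert^{s}\Vert$, $a_2=\Vert\,\vert S\vert^r\vert T\vert^{1-s}\Vert$, $b_2=\Vert\,\vert S\vert^{1-r}\vert T\vert^{s}\Vert$, your chain gives, for every $t>0$,
\begin{equation*}
\Vert S+T\Vert\le\max\{\Vert S\Vert,\Vert T\Vert\}+\tfrac12\max\Bigl\{\tfrac1t a_1+t b_1,\ \tfrac1t a_2+t b_2\Bigr\},
\end{equation*}
and minimizing the right-hand side over $t$ yields $\tfrac12\min_{t>0}\max\{\cdot,\cdot\}$, which in general is strictly larger than $\max\{\sqrt{a_1b_1},\sqrt{a_2b_2}\}$: you cannot push the minimum inside the maximum, because the two branches are minimized at different values $t_1=\sqrt{a_1/b_1}$ and $t_2=\sqrt{a_2/b_2}$. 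For instance, with $a_1=4$, $b_1=1$, $a_2=1$, $b_2=4$ (such configurations do occur, e.g. for suitable weighted-shift-type $2\times2$ matrices once $r\neq\frac12$ or $s\neq\frac12$), one has $\min_{t>0}\max\{\frac4t+t,\frac1t+4t\}=5$, attained at $t=1$, while branchwise minimization would give $4$; so the argument only delivers the weaker bound in which $\max\{\sqrt{a_1b_1},\sqrt{a_2b_2}\}$ is replaced by $\tfrac12\min_{t>0}\max\{\frac1t a_1+tb_1,\ \frac1t a_2+tb_2\}$.

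What survives: whenever the two branches share a minimizing $t$ the argument is sound; in particular for $r=s=\frac12$ one has $a_1=b_1$ and $a_2=b_2$, both branches are minimized at $t=1$, and the special case \eqref{num} follows exactly as you describe. To be fair, the defect is inherited from the paper itself, whose one-line proof glosses over precisely this point; but as written, neither your proposal nor the paper establishes the first displayed inequality for general $r,s\in[0,1]$ --- it would require either a different argument or a restatement with the common-$t$ bound above.
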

\section{Some results for the numerical radius}
In this section, as application of norm inequalities for sums of operators, we present some inequalities for the numerical radius.
\begin{theorem}\label{the3}
Let $S, T\in {\mathbb B}({\mathscr H})$. Then
\begin{eqnarray*}
w(S-T)&\geq&\max\{2w(S),2w(T)\}
 -\max\left\{\left\Vert S\right\Vert,\left\Vert T\right\Vert\right\}
 \\&&-
\max \left\{ \Vert  \vert S^* \vert^r\vert T^*\vert^{1-s}\Vert^{\frac{1}{2}}\Vert\vert S^* \vert^{1-r}\vert T^*\vert^s \Vert^{\frac{1}{2}}, \Vert\vert S \vert^r\vert T\vert^{1-s}\Vert^{\frac{1}{2}} \Vert \vert S \vert^{1-r}\vert T\vert^s\Vert^{\frac{1}{2}} \right\},
\end{eqnarray*}
where $r,s\in[0,1]$.
\end{theorem}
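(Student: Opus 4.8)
The plan is to deduce the claimed lower bound for $w(S-T)$ directly from the norm estimate for $\|S+T\|$ proved in the last corollary of the previous section, using nothing more than the elementary identity $2S=(S+T)+(S-T)$ together with the facts that $w(\cdot)$ is a norm on ${\mathbb B}({\mathscr H})$ and that $w(A)\le\|A\|$ for every $A\in{\mathbb B}({\mathscr H})$.

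First I would write $2S=(S+T)+(S-T)$ and invoke subadditivity of the numerical radius together with $w(S+T)\le\|S+T\|$, obtaining
\[
2w(S)=w(2S)\le w(S+T)+w(S-T)\le \|S+T\|+w(S-T),
\]
so that $w(S-T)\ge 2w(S)-\|S+T\|$. Exchanging the roles of $S$ and $T$ and using instead the decomposition $2T=(S+T)-(S-T)$ yields in the same way $w(S-T)\ge 2w(T)-\|S+T\|$. Taking the larger of the two estimates gives
\[
w(S-T)\ \ge\ \max\{2w(S),2w(T)\}-\|S+T\|.
\]

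It then remains to bound $\|S+T\|$ from above. For this I would apply the corollary of the previous section obtained from Theorem~\ref{the2} by choosing $f(t)=t^{r}$, $g(t)=t^{1-s}$, which states that for $r,s\in[0,1]$
\[
\|S+T\|\le \max\{\|S\|,\|T\|\}+\max\Big\{\,\|\,|S^{*}|^{r}|T^{*}|^{1-s}\|^{\frac12}\|\,|S^{*}|^{1-r}|T^{*}|^{s}\|^{\frac12},\ \|\,|S|^{r}|T|^{1-s}\|^{\frac12}\|\,|S|^{1-r}|T|^{s}\|^{\frac12}\,\Big\}.
\]
Substituting this into the previous display produces precisely the asserted inequality.

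The argument is essentially mechanical, so I do not expect a genuine obstacle; the only points that need care are the sign bookkeeping in the two decompositions $2S=(S+T)+(S-T)$ and $2T=(S+T)-(S-T)$, and the observation that it is $\|S+T\|$ (rather than $\|S-T\|$) that must be estimated, so that the upper bound from the preceding corollary applies verbatim with no change to the moduli $|S|,|S^{*}|,|T|,|T^{*}|$ appearing on the right-hand side.
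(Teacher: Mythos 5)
Your proof is correct and follows essentially the same route as the paper: both reduce the claim to $\max\{2w(S),2w(T)\}\le w(S+T)+w(S-T)$, then use $w(S+T)\le\Vert S+T\Vert$ and the corollary bounding $\Vert S+T\Vert$ by $\max\{\Vert S\Vert,\Vert T\Vert\}$ plus the max of the two mixed-modulus terms. The only difference is cosmetic: the paper packages the step $\max\{2w(S),2w(T)\}\le w(S+T)+w(S-T)$ via $2\times 2$ block-diagonal operator matrices and the identity $w\!\left(\left[\begin{smallmatrix} A&0\\ 0&B\end{smallmatrix}\right]\right)=\max\{w(A),w(B)\}$, whereas your direct decompositions $2S=(S+T)+(S-T)$ and $2T=(S+T)-(S-T)$ give the same inequality more plainly.
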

\begin{proof}
If $S, T\in {\mathbb B}({\mathscr H})$, then
\begin{eqnarray*}
2\max\{w(S),w(T)\}&=&
w\left(\left[
\begin{array}{cc}
S&0 \\
0 & T%
\end{array}%
\right]\right)
\\&=&
w\left(\left[
\begin{array}{cc}
S+T&0 \\
0 & T+S%
\end{array}%
\right]+\left[
\begin{array}{cc}
S-T&0 \\
0 & T-S%
\end{array}%
\right]\right)
\\&\leq&
w\left(\left[
\begin{array}{cc}
S+T&0 \\
0 & T+S%
\end{array}%
\right]\right)+w\left(\left[
\begin{array}{cc}
S-T&0 \\
0 & T-S%
\end{array}%
\right]\right)
\\&=&
w(S+T)+w(S-T),
\end{eqnarray*}
whence
\begin{eqnarray*}
w(S-T)\geq2\max\{w(S),w(T)\}-w(S+T).
\end{eqnarray*}
It follows from $w(S+T)\leq\Vert S+T\Vert$ and Corollary  that
\begin{eqnarray*}
w(S-T)&\geq&\max\{2w(S),2w(T)\}-w(S+T)
\\&\geq&
\max\{2w(S),2w(T)\}-\Vert S+T\Vert
 \\&\geq&
 \max\{2w(S),2w(T)\}
 -\max\left\{\left\Vert S\right\Vert,\left\Vert T\right\Vert\right\}
 \\&&-
\max \left\{ \Vert  \vert S^* \vert^r\vert T^*\vert^{1-s}\Vert^{\frac{1}{2}}\Vert\vert S^* \vert^{1-r}\vert T^*\vert^s \Vert^{\frac{1}{2}}, \Vert\vert S \vert^r\vert T\vert^{1-s}\Vert^{\frac{1}{2}} \Vert \vert S \vert^{1-r}\vert T\vert^s\Vert^{\frac{1}{2}} \right\}
\\&&\qquad\qquad\qquad\qquad\qquad\qquad(\textrm{by Corollary }),
\end{eqnarray*}
where $r,s\in[0,1]$.
\end{proof}
\begin{remark}
If $S, T\in {\mathbb B}({\mathscr H})$ are normal operator, then $\vert S\vert=\vert S^*\vert$, $\vert T\vert=\vert T^*\vert$, $w(S)=\Vert S\Vert$ and $w(T)=\Vert T\Vert$. This concludes that Theorem  appear as
\begin{eqnarray*}
\Vert S-T\Vert\geq
 \max\left\{\left\Vert S\right\Vert,\left\Vert T\right\Vert\right\}
   -\Vert  \vert S \vert^r\vert T\vert^{1-s}\Vert^{\frac{1}{2}}\Vert\vert S \vert^{1-r}\vert T\vert^s \Vert^{\frac{1}{2}}.
\end{eqnarray*}
In particular, if $S$ and $T$ are positive, then for $r=s=\frac{1}{2}$ we have
\begin{eqnarray*}
\Vert S-T\Vert\geq
 \max\left\{\left\Vert S\right\Vert,\left\Vert T\right\Vert\right\}
 -  \Vert   S ^\frac{1}{2} T^\frac{1}{2}\Vert.
\end{eqnarray*}
\end{remark}
In the next result, we obtain an upper bound for the numerical radius.
\begin{theorem}\label{the3}
Let $S\in {\mathbb B}({\mathscr H})$ and  $f_1,f_2,g_1,g_2$ be  non-negative
continuous functions on $[0,\infty )$, in which $f_1(x)f_2(x)=x$ and $g_1(x)g_2(x)=x\,\,(x\geq 0)$. Then
\begin{eqnarray*}
w(S)\leq \frac{1}{2}\left\Vert S\right\Vert+\frac{1}{4}
\max\{\alpha,\beta\},
\end{eqnarray*}
in which $\alpha=\left\Vert   f_1(\vert S^* \vert)g_1(\vert S\vert)+f_2(\vert S^* \vert)g_2(\vert S\vert) \right\Vert$ and $\beta=\left\Vert   f_1(\vert S \vert)g_1(\vert S^*\vert)+f_2(\vert S \vert)g_2(\vert S^*\vert) \right\Vert$.
In particular,
\begin{eqnarray*}
w(S)\leq\frac{1}{2}\Vert S\Vert+\frac{1}{4}\left\Vert   f_1(\vert S^* \vert)f_2(\vert S\vert)+f_2(\vert S^* \vert)f_1(\vert S\vert) \right\Vert.
\end{eqnarray*}
\end{theorem}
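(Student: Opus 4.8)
The plan is to reduce the bound to the sum-of-operators inequality of Theorem~\ref{the2}, using the formula $w(S)=\sup_{\theta\in\mathbb R}\Vert\Re(e^{i\theta}S)\Vert$ recorded in the introduction. Since $\Re(e^{i\theta}S)=\tfrac12\big(e^{i\theta}S+e^{-i\theta}S^{*}\big)$, I would apply Theorem~\ref{the2} (with the auxiliary parameter taken to be $t=1$) to the pair
\[
A=e^{i\theta}S,\qquad B=e^{-i\theta}S^{*},
\]
so that $\Vert\Re(e^{i\theta}S)\Vert=\tfrac12\Vert A+B\Vert$.

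The next step is a short computation of the data appearing in Theorem~\ref{the2} for this pair. From $A^{*}A=S^{*}S$ and $AA^{*}=SS^{*}$ one reads off $|A|=|S|$ and $|A^{*}|=|S^{*}|$, and similarly $|B|=|S^{*}|$, $|B^{*}|=|S|$; in particular $\Vert A\Vert=\Vert B\Vert=\Vert S\Vert$, so the term $(\Vert A\Vert-\Vert B\Vert)^{2}$ vanishes and $\tfrac12(\Vert A\Vert+\Vert B\Vert)=\Vert S\Vert$. Substituting these identifications into the quantities $\alpha,\beta$ of Theorem~\ref{the2} (with $t=1$) produces precisely the squares of the $\alpha,\beta$ in the present statement. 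Hence Theorem~\ref{the2} yields
\[
\Vert A+B\Vert\le\Vert S\Vert+\tfrac12\sqrt{\max\{\alpha^{2},\beta^{2}\}}=\Vert S\Vert+\tfrac12\max\{\alpha,\beta\},
\]
using $\alpha,\beta\ge 0$. Dividing by $2$ gives $\Vert\Re(e^{i\theta}S)\Vert\le\tfrac12\Vert S\Vert+\tfrac14\max\{\alpha,\beta\}$, and taking the supremum over $\theta$ — the right-hand side being independent of $\theta$ — delivers the first inequality.

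For the ``in particular'' assertion I would specialize $g_{1}=f_{2}$ and $g_{2}=f_{1}$, which is allowed since then $g_{1}(x)g_{2}(x)=f_{2}(x)f_{1}(x)=x$. With this choice
\[
\alpha=\big\Vert f_{1}(|S^{*}|)f_{2}(|S|)+f_{2}(|S^{*}|)f_{1}(|S|)\big\Vert,\qquad
\beta=\big\Vert f_{1}(|S|)f_{2}(|S^{*}|)+f_{2}(|S|)f_{1}(|S^{*}|)\big\Vert .
\]
Because $f_{1}(|S|),f_{2}(|S|),f_{1}(|S^{*}|),f_{2}(|S^{*}|)$ are all self-adjoint (real-valued functions of positive operators), the operator inside the norm defining $\beta$ is the adjoint of the one inside the norm defining $\alpha$; hence $\alpha=\beta$, and $\max\{\alpha,\beta\}=\Vert f_{1}(|S^{*}|)f_{2}(|S|)+f_{2}(|S^{*}|)f_{1}(|S|)\Vert$, as claimed.

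I do not anticipate a real obstacle; the only points needing care are the identifications $|e^{i\theta}S|=|S|$ and $|(e^{i\theta}S)^{*}|=|S^{*}|$ (which make the $\theta$-dependence disappear), the choice $t=1$ that collapses the bound to the stated form, and the self-adjointness argument giving $\alpha=\beta$ in the special case.
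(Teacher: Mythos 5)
Your proposal is correct and follows essentially the same route as the paper: write $w(S)=\sup_\theta\Vert\Re(e^{i\theta}S)\Vert$, apply Theorem~\ref{the2} with $t=1$ to the pair $e^{i\theta}S$, $e^{-i\theta}S^*$, and use $\Vert e^{i\theta}S\Vert=\Vert e^{-i\theta}S^*\Vert=\Vert S\Vert$ so that the square-root term collapses to $\max\{\alpha,\beta\}$. Your explicit adjoint argument showing $\alpha=\beta$ in the case $g_1=f_2$, $g_2=f_1$ is a small point the paper leaves implicit, but otherwise the arguments coincide.
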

\begin{proof}
Using the of $w(\cdot)$ and Theorem  for $t=1$ we have
\begin{eqnarray*}
w(S)&=& \sup_{\theta\in\mathbb{R}}\Vert\Re({{\rm{e}}^{i\theta } T})\Vert
\\&=& \frac{1}{2}\sup_{\theta\in\mathbb{R}}\Vert{{\rm{e}}^{i\theta } S}+{{\rm{e}}^{-i\theta } S^*}\Vert
\\&\leq&\frac{1}{4}(\left\Vert S\right\Vert+\left\Vert S^*\right\Vert)+\frac{1}{4}
\max\{\alpha,\beta\}\qquad(\textrm{by Theorem })
\\&=&\frac{1}{2}\left\Vert S\right\Vert+\frac{1}{4}
\max\{\alpha,\beta\},
\end{eqnarray*}
in which $\alpha=\left\Vert   f_1(\vert S^* \vert)g_1(\vert S\vert)+f_2(\vert S^* \vert)g_2(\vert S\vert) \right\Vert$ and $\beta=\left\Vert   f_1(\vert S \vert)g_1(\vert S^*\vert)+f_2(\vert S \vert)g_2(\vert S^*\vert) \right\Vert$. In the special case for $g_1=f_2$ and $g_2=f_1$   we get the second inequality.
\end{proof}
\begin{theorem}\label{the3}
Let $S\in {\mathbb B}({\mathscr H})$. Then
\begin{eqnarray*}
w(S)\leq \max\left\{\left\Vert \Re S\right\Vert,\left\Vert  \Im S\right\Vert\right\}+
\frac{1}{2} \Vert  \vert \Re S \vert^{\frac{1}{2}}\vert  \Im S\vert^{\frac{1}{2}}\Vert.
\end{eqnarray*}
\end{theorem}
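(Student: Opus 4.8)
The plan is to reduce everything to estimating the norm of a single selfadjoint operator and then to exploit the $C^{*}$-identity $\Vert H\Vert^{2}=\Vert H^{2}\Vert$. Using the representation $w(S)=\sup_{\alpha^{2}+\beta^{2}=1}\Vert\alpha\,\Re S+\beta\,\Im S\Vert$ recalled in the introduction (equivalently $w(S)=\sup_{\theta}\Vert\Re(e^{i\theta}S)\Vert$), fix real $\alpha,\beta$ with $\alpha^{2}+\beta^{2}=1$ and put $H=\alpha\,\Re S+\beta\,\Im S$. Since $H$ is selfadjoint, $\Vert H\Vert^{2}=\Vert H^{2}\Vert$, and
\[
H^{2}=\alpha^{2}(\Re S)^{2}+\beta^{2}(\Im S)^{2}+\alpha\beta\bigl((\Re S)(\Im S)+(\Im S)(\Re S)\bigr),
\]
so the whole problem comes down to estimating the right-hand side.

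Write $m=\max\{\Vert\Re S\Vert,\Vert\Im S\Vert\}$ and $c=\bigl\Vert\,\vert\Re S\vert^{1/2}\vert\Im S\vert^{1/2}\,\bigr\Vert$. For the positive part, the triangle inequality together with $\Vert(\Re S)^{2}\Vert=\Vert\Re S\Vert^{2}$ gives $\Vert\alpha^{2}(\Re S)^{2}+\beta^{2}(\Im S)^{2}\Vert\le\alpha^{2}\Vert\Re S\Vert^{2}+\beta^{2}\Vert\Im S\Vert^{2}\le(\alpha^{2}+\beta^{2})m^{2}=m^{2}$. For the selfadjoint cross term, $\Vert(\Re S)(\Im S)+(\Im S)(\Re S)\Vert\le 2\Vert(\Re S)(\Im S)\Vert$, and the key estimate is
\[
\Vert(\Re S)(\Im S)\Vert=\bigl\Vert\,\vert\Re S\vert\,\vert\Im S\vert\,\bigr\Vert\le\bigl\Vert\,\vert\Re S\vert^{1/2}\bigr\Vert\;\bigl\Vert\,\vert\Re S\vert^{1/2}\vert\Im S\vert^{1/2}\,\bigr\Vert\;\bigl\Vert\,\vert\Im S\vert^{1/2}\bigr\Vert=\Vert\Re S\Vert^{1/2}\Vert\Im S\Vert^{1/2}\,c\le m\,c,
\]
where the first equality is the identity $\Vert XY\Vert=\Vert\,\vert X\vert\,\vert Y\vert\,\Vert$ valid for selfadjoint $X,Y$ (a consequence of $X^{2}=\vert X\vert^{2}$, $Y^{2}=\vert Y\vert^{2}$ and the $C^{*}$-identity), and the inequality is submultiplicativity after factoring $\vert\Re S\vert=\vert\Re S\vert^{1/2}\vert\Re S\vert^{1/2}$ and $\vert\Im S\vert=\vert\Im S\vert^{1/2}\vert\Im S\vert^{1/2}$. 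Since $\vert\alpha\beta\vert\le\tfrac12(\alpha^{2}+\beta^{2})=\tfrac12$, combining the three estimates yields $\Vert H^{2}\Vert\le m^{2}+mc$.

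Hence $\Vert\alpha\,\Re S+\beta\,\Im S\Vert\le\sqrt{m^{2}+mc}$ for every admissible pair $\alpha,\beta$, and taking the supremum gives $w(S)\le\sqrt{m^{2}+mc}$. Since $m^{2}+mc\le m^{2}+mc+\tfrac14 c^{2}=(m+\tfrac12 c)^{2}$, we conclude $w(S)\le m+\tfrac12 c$, which is precisely the claimed inequality. The step I expect to be the main obstacle is the cross-term bound $\Vert(\Re S)(\Im S)\Vert\le\Vert\Re S\Vert^{1/2}\Vert\Im S\Vert^{1/2}\,c$ together with the clean emergence of the constant $\tfrac12$: feeding the pair $(\alpha\,\Re S,\beta\,\Im S)$ directly into inequality \eqref{num} only produces the weaker factor $\vert\alpha\beta\vert^{1/2}\le 2^{-1/2}$ in front of $\bigl\Vert\,\vert\Re S\vert^{1/2}\vert\Im S\vert^{1/2}\,\bigr\Vert$, so obtaining the sharp constant really forces one to go through $\Vert H\Vert^{2}=\Vert H^{2}\Vert$ and handle the commutator term separately.
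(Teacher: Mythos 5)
Your proof is correct, and it takes a genuinely different route from the paper. The paper proves this theorem by writing $w(S)=\sup_{\alpha^2+\beta^2=1}\Vert \alpha\Re S+\beta\Im S\Vert$ and feeding the pair $(\alpha\Re S,\beta\Im S)$ into its norm inequality \eqref{num}; exactly as you predicted, that route produces the factor $\sup_{\alpha^2+\beta^2=1}\sqrt{\vert\alpha\beta\vert}=\frac{\sqrt{2}}{2}$, and indeed the last line of the paper's proof reads $w(S)\leq \max\{\Vert\Re S\Vert,\Vert\Im S\Vert\}+\frac{\sqrt{2}}{2}\Vert\,\vert\Re S\vert^{1/2}\vert\Im S\vert^{1/2}\Vert$, which does not match the constant $\frac{1}{2}$ claimed in the statement. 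Your argument -- squaring the selfadjoint operator $H=\alpha\Re S+\beta\Im S$, using the $C^{*}$-identity $\Vert H\Vert^{2}=\Vert H^{2}\Vert$, the identity $\Vert XY\Vert=\Vert\,\vert X\vert\,\vert Y\vert\,\Vert$ for selfadjoint $X,Y$ (all steps check out, including the factorization $\vert X\vert=\vert X\vert^{1/2}\vert X\vert^{1/2}$ and $\vert\alpha\beta\vert\leq\frac12$) -- yields $\Vert H\Vert\leq\sqrt{m^{2}+mc}\leq m+\frac{1}{2}c$ with $m=\max\{\Vert\Re S\Vert,\Vert\Im S\Vert\}$ and $c=\Vert\,\vert\Re S\vert^{1/2}\vert\Im S\vert^{1/2}\Vert$, so it actually establishes the stated constant $\frac{1}{2}$ (and even the slightly stronger bound $\sqrt{m^{2}+mc}$), whereas the paper's own proof as written only delivers the weaker $\frac{\sqrt{2}}{2}$. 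In short: the paper's approach buys a quick corollary of its general sum inequality, while your more elementary $C^{*}$-identity argument buys the sharper constant and closes the gap between the paper's proof and its stated theorem.
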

\begin{proof}
Using inequality  and the definition $w(\cdot)$ we have
\begin{eqnarray*}
w(S)&=&\sup_{\alpha^2+\beta^2=1}\Vert \alpha\Re S+\beta \Im S\Vert
\\&\leq&
\sup_{\alpha^2+\beta^2=1}\left(\max\left\{\left\Vert \alpha\Re S\right\Vert,\left\Vert  \beta\Im S\right\Vert\right\}+
\max \left\{ \Vert  \vert \alpha\Re S \vert^{\frac{1}{2}}\vert  \beta\Im S\vert^{\frac{1}{2}}\Vert, \Vert\vert \alpha\Re S \vert^{\frac{1}{2}}\vert \beta \Im S\vert^{\frac{1}{2}}\Vert   \right\}\right)
\\&&\qquad\qquad\qquad\qquad\qquad(\textrm{by inequality })
\\&\leq&
\max\left\{\left\Vert \Re S\right\Vert,\left\Vert  \Im S\right\Vert\right\}+
\sup_{\alpha^2+\beta^2=1}\left(\sqrt{\vert \alpha\beta\vert} \Vert  \vert \Re S \vert^{\frac{1}{2}}\vert  \Im S\vert^{\frac{1}{2}}\Vert  \right)
\\&\leq&
\max\left\{\left\Vert \Re S\right\Vert,\left\Vert  \Im S\right\Vert\right\}+
\frac{\sqrt{2}}{2} \Vert  \vert \Re S \vert^{\frac{1}{2}}\vert  \Im S\vert^{\frac{1}{2}}\Vert.
\end{eqnarray*}
\end{proof}
\noindent  {Conflict of Interest:} The author declare that there is no conflict of interest.\\

\noindent  {Funding:}  The authors declare that there is no research grants from funding agencies, universities, or other parties.

\end{document}